%%%%%%%%%%%%%%%%%%%%%%%%%%%%%%%%%%%%%%%%%%%%%%%%%%%%%%%%%%%%%%%%%%%%%%%%%%%%%%%%%%%%%%%%%%%%%%
%% %% AGOSTINIANI MAGNANINI - STABILITA' GREEN %%
%%%%%%%%%%%%%%%%%%%%%%%%%%%%%%%%%%%%%%%%%%%%%%%%%%%%%%%%%%%%%%%%%%%%%%%%%%%%%%%%%%%%%%%%%%%%%%%

\documentclass[final,10pt]{amsart} 
\usepackage{latexsym,amssymb}
\usepackage{hyperref} 
\usepackage{amsmath,amsthm}
\usepackage{amsfonts} 
\usepackage[american]{babel}  
\usepackage{mathrsfs}
\usepackage{psfrag} 
\usepackage[dvips]{graphicx} % che opzione dà [dvips]?

\newcommand{\R}{\mathbb{R}} 

\newcommand{\C}{\mathbb{C}}

\newcommand{\eee}{\`e}

\numberwithin{equation}{section} 
\newtheorem{theorem}{Theorem}[section]
 
\newtheorem{cor}[theorem]{Corollary}
 
\newtheorem{remark}[theorem]{Remark}

\begin{document} 

\title[Stability for the Green's function] {Stability in an overdetermined problem for the Green's function}

\author[V. Agostiniani]{Virginia Agostiniani} 
\author[R. Magnanini]{Rolando Magnanini}

\address{V. Agostiniani, SISSA, via Beirut 2-4, 34151 Trieste - Italy} 
\email{vagostin@sissa.it}

\address{R. Magnanini, Dip.to di Matematica ``U. Dini'', Universit\`a 
degli Studi di Firenze, viale Morgagni 67/A, 50134 Firenze - Italy}
\email{rolando.magnanini@math.unifi.it}

\date{}

\begin{abstract} 
In the plane, we consider the
problem of reconstructing a domain from the normal derivative of its Green's
function (with fixed pole) relative to the 
Dirichlet problem for the Laplace operator. By means of the theory of 
conformal mappings, we derive stability
estimates of polynomial type in H\"older norms. 
\end{abstract}

\maketitle

%\subjclass{Primary: 35N25; Secondary: 35J08, 35B35}
%\keywords{Overdetermined boundary values problems, Green's function, stability.}

%%%%%%%%%%%%%%%%%%%%%%%%%%%%%%%%%%%%%%%%%%%%%%%%%%%%%%%%%%%%%%%%%
\section{Introduction}\label{intro}
%%%%%%%%%%%%%%%%%%%%%%%%%%%%%%%%%%%%%%%%%%%%%%%%%%%%%%%%%%%%%%%%%%

The study of overdetermined boundary problems in partial differential equations finds
its motivations in many areas of mathematics, such as inverse and free boundary problems, isoperimetric inequalities and optimal 
design. 
As in Serrin's seminal paper \cite{Se}, in many such problems the analysis is mainly focused on the (spherical) symmetry of the 
domain considered.  

In recent years, several authors have commenced to analyse the stability of the aforementioned symmetric configurations in presence of 
approximate (boundary) data (\cite{Af}, \cite{Bra}, \cite{Bra2}, \cite{MR}); see also the work on quantitative isoperimetric inequalities 
(\cite{C}, \cite{Mag}).

In \cite{Af}, a logarithmic estimate of approximate (spherical) symmetry is deduced for a quite general semilinear overdetermined 
problem. From the proof, based on an ingenious adaptation of Serrin's moving-planes argument, it is clear that the logarithmic 
character of the stability estimate is due to the use of Harnack inequality.  
Such a drawback appears to be inherent in the method employed and cannot even be removed by considering simpler nonlinearities.

An improved estimate -- of polynomial type, but only for the torsion problem -- has been derived in \cite{Bra} by combining 
Poho\u{z}aev integral identity and some geometric inequalities.

In the present paper, we will tackle a more detailed study of the stability in the plane by exploiting the theory of conformal mappings
as we have already done in \cite{Ag} for the study of symmetries, with the aim of deriving optimal estimates. 

As in \cite{Ag}, we will work on a case study: in a planar bounded domain $\Omega$ with boundary
$\partial\Omega$ of class $C^{1,\alpha}$, we shall consider the problem
\begin{eqnarray}
                            -\Delta u\!\!\!&=&\!\!\!\delta_{\zeta_o}\hspace{1.5cm}\mbox{in }\Omega, \label{pb1}\\ 
                                      u\!\!\!&=&\!\!\!0\hspace{1.5cm}\mbox{on }\partial\Omega, \label{pb2}\\ 
\frac{\partial u}{\partial\nu}\!\!\!&=&\!\!\!\varphi\hspace{1.5cm}\mbox{on }\partial\Omega. \label{pb3} 
\end{eqnarray} 
where $\nu$ is the \emph{interior} normal direction to $\partial\Omega$, $\delta_{\zeta_o}$ is the Dirac delta centered at a given 
point $\zeta_o\in\Omega$ and $\varphi:\partial\Omega\rightarrow\R$ is a positive given function of arclength, measured from a 
reference point on $\partial\Omega$. 

Problem (\ref{pb1})-(\ref{pb3}) should be interpreted as follows: find a domain $\Omega$ whose Green's function $u$ with pole at
$\zeta_o$ has gradient with values on the boundary that fit those of the given function $\varphi$.
This problem has some analogies to a model for the Hele-Shaw flow, as
presented in \cite{Gu} and \cite{Sa}.

In \cite{Ag}, we estabilished a connection between $\varphi$ and $\Omega$ by using conformal mappings:
chosen two distinct points $\zeta_b$ and $\zeta_o$ and a number $\alpha\in(0,1)$, we introduced the set  
$$
\mathscr O=\{\Omega\subseteq\C:\Omega\mbox{ open, bounded, simply connected, }C^{1,\alpha},\;
\zeta_o\in\Omega,\;\zeta_b\in\partial\Omega\}
$$ 
and the class of functions
$$
\mathscr F=\{f\in C^{1,\alpha}(\overline D,\C):f\mbox{ one-to-one, analytic, }f(0)=\zeta_o,\;f(1)=\zeta_b\},
$$
where $D$ is the open unit disk.
\par 
By the Riemann Mapping Theorem $\mathscr O$ and $\mathscr F$ are in one-to-one correspondence. 
In   ̃\cite[Theorem 2.2]{Ag}, we proved that the operator $\mathcal T$ that to each 
$f\in\mathscr F$ associates the interior normal derivative $\mathcal T(f)$ on $\partial\Omega$ of the solution of 
(\ref{pb1})-(\ref{pb2}) is injective: an $f\in\mathscr F$ is uniquely determined by the formula
\begin{equation}\label{rappresentazione f}
f'(z)=e^{i\gamma}\exp\left\{\frac 1{2\pi}\int_0^{2\pi}\frac{e^{it}+z}{e^{it}-z}\log\frac
1{2\pi\varphi(\Phi^{-1}(t))}dt\right\},\ \ z\in D, 
\end{equation} 
where 
\begin{equation}\label{Fi}
\varphi(s)=\mathcal T(f)(s),\ \ 
\Phi(s)=2\pi\int_0^s\varphi(\sigma)d\sigma,\ \ s\in[0,|\partial\Omega|];
\end{equation}
here, the constant $\gamma$ is normalized by the condition
\begin{equation}\label{normalizz.}
\int_0^1f'(t)dt=\zeta_b-\zeta_o.
\end{equation}
\par
By means of (\ref{rappresentazione f})-(\ref{normalizz.}), in \cite{Ag} we obtained results relating the symmetry 
of $\Omega$ to certain invariance properties of $\varphi$.

Here, by using the same ideas, we deduce stability results both near the disk 
and near any simply connected domain. Two typical results that better illustrate our work follow.

Preliminarly, we introduce some more notations. Given some positive constants
$L$, $m$, $M_0$ and $M_1$, we define two classes of functions:
$$ 
\mathscr G_0^L=\{\varphi\in C^{0,\alpha}(\R):\ \ \varphi\mbox{ is }L\mbox{-periodic},\ \ \varphi\geq m,
\ \ \Vert \varphi\Vert_{0,\alpha,[0,L]}\leq M_0\}, 
$$ 
$$ 
\mathscr G_1^L=\{\varphi\in\mathscr G_0^L\cap C^{1,\alpha}(\R):\ \ \Vert\varphi\Vert_{1,\alpha,[0,L]}\leq M_1\}.
$$
For the definitions of the relevant H\"older norms, we refer the reader to Section \ref{ingredienti}.
%Also, given a domain $\Omega\in\mathscr O$, we denote by $B(\zeta_{int},r_{int})$ and $B(\zeta_{ext},r_{ext})$ 
%the largest disk contained in $\Omega$ and the smallest disk containing $\Omega$, respectively.

\begin{theorem}\label{raggi} 
Let $\Omega\in \mathscr O$ be 
% a planar bounded open simply connected domain of class $C^{1,\alpha}$ and 
with perimeter $L$.

Assume that $B(\zeta_o,\rho)$ and $B(\zeta_o,R)$ are  
the largest disk contained in $\Omega$ and the smallest disk containing $\Omega$,
centered at $\zeta_o$, respectively.

Let $\varphi$ be the interior normal derivative on $\partial\Omega$ (as function of the arclength) 
of the solution of \eqref{pb1}-\eqref{pb2}
and set $C=\frac 1{2\pi\rho}$.

Then, if $\varphi\in\mathscr G_0^L,$ there exists a constant $K$, depending on $\alpha$, $\rho$ and $M_0,$ such that 
\begin{equation*}
R-\rho\leq K\Vert\varphi-C\Vert_{0,\alpha,[0,L]}. 
\end{equation*}
\end{theorem}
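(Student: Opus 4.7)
The plan is to exploit the representation formula \eqref{rappresentazione f} to compare the conformal map $f$ associated with $\Omega$ to the affine map $z\mapsto\zeta_o+\rho e^{i\gamma}z$, whose image is the disk $B(\zeta_o,\rho)$ realised in the rigidity case $\varphi\equiv C=1/(2\pi\rho)$. Since $B(\zeta_o,\rho)\subset\Omega\subset B(\zeta_o,R)$, one has $\rho=\min_\theta|f(e^{i\theta})-\zeta_o|$ and $R=\max_\theta|f(e^{i\theta})-\zeta_o|$, so the proof reduces to controlling the oscillation of $|f(e^{i\theta})-\zeta_o|$ around $\rho$.

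I would first reduce the problem to a sup-estimate on $f'$. Writing $f(e^{i\theta})-\zeta_o=e^{i\theta}\int_0^1 f'(re^{i\theta})\,dr$ and comparing with the constant $\rho e^{i\gamma}$, the reverse triangle inequality yields
\[
\bigl||f(e^{i\theta})-\zeta_o|-\rho\bigr|\le \sup_{|z|\le 1}\bigl|f'(z)-\rho e^{i\gamma}\bigr|,
\]
and hence $R-\rho\le \sup_{\overline D}|f'-\rho e^{i\gamma}|$.

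I would then analyse $f'$ via \eqref{rappresentazione f}, which gives $f'(z)=e^{i\gamma}\exp h(z)$ with $h$ the Schwarz integral of the $2\pi$-periodic function $g(t)=-\log(2\pi\varphi(\Phi^{-1}(t)))$. The choice $C=1/(2\pi\rho)$ is precisely the one for which $g\equiv\log\rho$, $h\equiv\log\rho$, and $f'\equiv\rho e^{i\gamma}$ in the rigidity case. For general $\varphi$, a uniform bound on $|h|$ on $\overline D$ together with the mean value inequality for $\exp$ gives $|f'(z)-\rho e^{i\gamma}|\le c_1|h(z)-\log\rho|$; and since $h-\log\rho$ is the Schwarz integral of $g-\log\rho$, Privalov's theorem on the continuity of the harmonic conjugate in Hölder spaces yields
\[
\sup_{\overline D}|h-\log\rho|\le c_2(\alpha)\,\|g-\log\rho\|_{0,\alpha,[0,2\pi]}.
\]
Finally, since $\Phi'=2\pi\varphi\ge 2\pi m$ makes $\Phi^{-1}$ Lipschitz and $x\mapsto-\log(2\pi x)$ is Lipschitz on the range of $\varphi$, the standard composition rules in $C^{0,\alpha}$ give $\|g-\log\rho\|_{0,\alpha,[0,2\pi]}\le c_3\|\varphi-C\|_{0,\alpha,[0,L]}$, and chaining the three bounds closes the argument.

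The main obstacle will be the use of Privalov's theorem: the continuity of the Schwarz integral as an operator $C^{0,\alpha}(\partial D)\to C^{0,\alpha}(\overline D)$ is classical but technical, and its constant degenerates as $\alpha\to 0^+$ or $\alpha\to 1^-$, which accounts for the $\alpha$-dependence of $K$. A secondary task is bookkeeping: the auxiliary parameters $m$ and $L$ appearing in the intermediate constants must be reabsorbed into the stated dependence on $\alpha$, $\rho$ and $M_0$. For this one uses the integral identity $\int_0^L\varphi\,ds=1$ (obtained by integrating \eqref{pb1}) together with the geometric information carried by $\rho$, restricting to the nontrivial regime in which $\|\varphi-C\|_\infty$ is not too large (larger deviations making the asserted bound trivial).
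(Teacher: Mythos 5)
Your proposal is correct and follows essentially the same route as the paper: compare $f$ with the affine map $f_C(z)=\zeta_o+\rho e^{i\gamma}z$ of the inscribed disk, reduce $R-\rho$ to a sup bound on $f-f_C$ on $\partial D$ (the paper does this via Theorem \ref{disco}, which is Theorem \ref{stab.gen} specialized to a constant datum), and control $\log f'$ through the Schwarz-integral representation \eqref{rappresentazione f} — your ``Privalov'' step is exactly the paper's explicit estimate with $\int_0^\pi t^{\alpha}\cot\frac t2\,dt$ — followed by the same composition estimates passing from $\varphi\circ\Phi^{-1}$ to $\varphi$. The only difference is bookkeeping: the paper simply keeps the dependence on $m$ inherited from Theorem \ref{stab.gen} rather than reabsorbing it as in your final remark, so that extra step is not needed for the argument.
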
 
Theorem \ref{raggi} can be considered an analogue of   ̃\cite[Theorem 1]{Af} and   ̃\cite[Theorem 1.2]{Bra}.
Notice that here we obtain Lipschitz stability.
In the following result, we give a stability estimate involving the Hausdorff
distance $d_{\mathit H}$ of any
two bounded simply connected domains (for the 
definition of $d_{\mathit H}$, we refer the reader to Section \ref{stability}). 

\begin{theorem}\label{hausdorff}
Let $\Omega_1$ and $\Omega_2\in\mathscr O$ be domains with the same perimeter $L$ and $f_1$ and
$f_2$ the corresponding conformal mappings in $\mathscr F$. Suppose that $\mathcal T(f_1)$, 
$\mathcal T(f_2)\in\mathscr G_1^L$.

Then, up to rotations around $\zeta_o$, we have that 
\begin{equation*}
d_{\mathit H}(\Omega_1,\Omega_2)\leq 
K\Vert\mathcal T(f_1)-\mathcal T(f_2)\Vert_{1,0,[0,L]}^{\alpha},
\end{equation*}
where the constant $K$, whose expression can be deduced from the proof, depends on $\alpha$, $m$, $M_1$ and $L$.
\end{theorem}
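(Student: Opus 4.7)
The plan is to reduce the Hausdorff bound to a sup-norm estimate on the conformal maps, then to a sup-norm estimate on their derivatives on $\partial D$, and finally to an application of the representation formula \eqref{rappresentazione f} combined with the boundedness of the conjugation operator on H\"older spaces. Since the boundaries $\partial\Omega_i$ are of class $C^{1,\alpha}$, each $f_i$ extends to a homeomorphism of $\overline D$ onto $\overline\Omega_i$, so the elementary inequality
\[
d_{\mathit H}(\Omega_1,\Omega_2)\leq \sup_{|z|=1}|f_1(z)-f_2(z)|
\]
holds. Integrating $f_i'$ along a radius from $\zeta_o=f_1(0)=f_2(0)$ gives $|f_1(z)-f_2(z)|\leq|z|\,\|f_1'-f_2'\|_{C^0(\overline D)}$, and the holomorphy of $f_1'-f_2'$ in $D$ together with the maximum modulus principle reduces the problem to estimating $\|f_1'-f_2'\|_{C^0(\partial D)}$.

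Introduce $h_i(t)=-\log(2\pi\varphi_i(\Phi_i^{-1}(t)))$, with $\varphi_i=\mathcal T(f_i)$. Taking boundary values in \eqref{rappresentazione f} via the Schwarz integral formula gives
\[
\log f_i'(e^{i\theta})=i\gamma_i+h_i(\theta)+i\widetilde h_i(\theta),
\]
where $\widetilde h_i$ denotes the boundary harmonic conjugate of $h_i$. Using the rotational freedom around $\zeta_o$ in the statement I align $\gamma_1=\gamma_2$, so the difference of exponents reduces to $(h_1-h_2)+i(\widetilde h_1-\widetilde h_2)$. The identity $|f_i'|=1/(2\pi\varphi_i\circ\Phi_i^{-1})$ shows that $|f_i'|$ is bounded above and below uniformly in the class $\mathscr G_1^L$, so the exponential is Lipschitz on its image and the task reduces to bounding $\|h_1-h_2\|_\infty$ and $\|\widetilde h_1-\widetilde h_2\|_\infty$.

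Set $\delta=\|\mathcal T(f_1)-\mathcal T(f_2)\|_{1,0,[0,L]}$. The lower bound $\varphi_i\geq m$ combined with $\Phi_i(s)=2\pi\int_0^s\varphi_i$ first gives $\|\Phi_1-\Phi_2\|_\infty\leq C\delta$ and then, by inversion, $\|\Phi_1^{-1}-\Phi_2^{-1}\|_\infty\leq C\delta$; the Lipschitz character of $\log$ on $[m,+\infty)$ together with an add-and-subtract step yields $\|h_1-h_2\|_\infty\leq C\delta$. Writing out $h_i'(t)=-\varphi_i'(\Phi_i^{-1}(t))/[2\pi(\varphi_i\circ\Phi_i^{-1})^2(t)]$ and repeating the same bookkeeping using the uniform $C^{1,\alpha}$ control supplied by $\varphi_i\in\mathscr G_1^L$ produces $\|h_1'-h_2'\|_\infty\leq C(\delta+\delta^\alpha)\leq C\delta^\alpha$; the $\alpha$-power enters precisely in the step $|\varphi_1'(\Phi_1^{-1})-\varphi_1'(\Phi_2^{-1})|\leq[\varphi_1']_{0,\alpha}\,\|\Phi_1^{-1}-\Phi_2^{-1}\|_\infty^{\alpha}$. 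Interpolating the $L^\infty$ and $C^1$ bounds on $h_1-h_2$ then yields $\|h_1-h_2\|_{C^{0,\alpha}}\leq C\delta^\alpha$, and Privalov's theorem on the boundedness of the conjugation operator on $C^{0,\alpha}$ gives $\|\widetilde h_1-\widetilde h_2\|_\infty\leq C\delta^\alpha$. Recombining the pieces through the Lipschitz exponential and the integration/max-modulus steps of the first paragraph delivers the claim.

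I expect the main technical hurdle to lie in carefully tracking the composition with $\Phi_i^{-1}$ in the definition of $h_i$: the $\alpha$-power in the final estimate, rather than a linear bound, is forced by the only H\"older (not Lipschitz) regularity of $\varphi_i'$ available in $\mathscr G_1^L$ and arises exactly at the step above. A related subtlety is that the conjugation operator fails to be bounded on $L^\infty$, so one cannot pass directly from the $L^\infty$ bound on $h_1-h_2$ to one on $\widetilde h_1-\widetilde h_2$; this makes the $C^1$ bound on $h_1-h_2$ (and hence the full H\"older-interpolation detour) essential, and explicit tracking of the H\"older constants through Privalov's bound gives the dependence of $K$ on $\alpha$, $m$, $M_1$, $L$ claimed in the statement.
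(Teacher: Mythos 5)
Your proposal is correct and follows essentially the same route as the paper: reduce $d_{\mathit H}$ to $\sup_{\partial D}|f_1-f_2|$, then to $\sup_{\partial D}|f_1'-f_2'|$ via integration from $\zeta_o$ and the maximum principle, then split $\log f_j'$ into $\log|f_j'|=-\log\bigl(2\pi\,\mathcal T(f_j)\circ\Phi_j^{-1}\bigr)$ and its harmonic conjugate, with the $\alpha$-power arising exactly where the paper gets it, namely from $[\varphi_1']_{0,\alpha}$ acting on $\Vert\Phi_1^{-1}-\Phi_2^{-1}\Vert_\infty$. The only cosmetic difference is that you invoke Privalov's theorem for the conjugation operator on $C^{0,\alpha}$, whereas the paper proves the needed bound directly from the conjugate Poisson kernel estimate in Theorem \ref{stab.gen} and then filters the estimates through Theorem \ref{lugua}.
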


% There, we will also present a stability estimate involving the curvatures of $\partial\Omega_1$
% and $\partial\Omega_2$.

Theorem \ref{hausdorff} seems to be new. Compare the Lipschitz stability obtained in Theorem \ref{raggi}
to the H\" older-type estimate obtained in Theorem \ref{hausdorff} (see Section \ref{stability} for the details). 
  In Subsection \ref{subsez.dom.gen} we will also present a more
general version of Theorem \ref{hausdorff}. 

%%%%%%%%%%%%%%%%%%%%%%%%%%%%%%%%%%%%%%%%%%%%%%%%%%%%%%%%%%%%%%%%%%%%%%%%%%%%%%%%%%%
\section{Some useful notations and results}\label{ingredienti}
%%%%%%%%%%%%%%%%%%%%%%%%%%%%%%%%%%%%%%%%%%%%%%%%%%%%%%%%%%%%%%%%%%%%%%%%%%%

In what follows, $D$ will always be the open unit disk in $\C$ centered at
$0.$ 

Let $\varphi:I\rightarrow\R$ be a function defined on an interval $I\subseteq\R$. We denote
$$
\Vert\varphi\Vert_{\infty,I}=\sup_{I}|\varphi|,\ \ 
[\varphi]_{k,\alpha,I}=\sup_{\stackrel{x,y\in I}{x\neq y}}
\frac{|\varphi^{(k)}(x)-\varphi^{(k)}(y)|}{|x-y|^{\alpha}}, 
$$
where
$k=0,1,...$, $0<\alpha\leq 1$ and $\varphi^{(k)}$ is the $k$-th derivative of $\varphi$, when defined. 
Moreover, we set:
$$
\Vert\varphi\Vert_{k,\alpha,I}=\sum_{j=0}^k\Vert\varphi^{(k)}\Vert_{\infty,I}+[\varphi]_{k,\alpha,I};\ \ 
\Vert\varphi\Vert_{k,0,I}=\sum_{j=0}^k\Vert\varphi^{(k)}\Vert_{\infty,I}
$$ 
and 
$$ C^{k,\alpha}(I)=\{\varphi\in C^0(I):\;\;\Vert\varphi\Vert_{k,\alpha,I}<+\infty\}. 
$$ 
 %As usual, we will call \textit{regular} a $C^1$ curve with 
%first derivative always different from $0$. Let us recall that a 
%\textit{Jordan curve} is a continuous simple curve and a 
%\textit{Jordan domain} a domain whose boundary is the image of a Jordan curve.

Let us recall some basic facts (see \cite{go} and \cite{Ma} for more details). If $\Omega\subseteq\C$ is a simply connected domain 
bounded by a Jordan curve and $\zeta_o\in\Omega$, then, from the Riemann Mapping Theorem, it follows that $\Omega$ 
is the image of an analytic function $f:D\rightarrow\Omega$ which induces a homeomorphism between the
closures $\overline D$ and $\overline\Omega$, has non-zero derivative $f'$ in $D$ and is such that $f(0)=\zeta_o.$ 
An application of Schwarz's Lemma proves that $f$ is unique if it fixes a point of the boundary, say
$f(1)=\zeta_b$ for a certain $\zeta_b\in\partial\Omega.$ Moreover, if $\Omega$ is of class $C^{1,\alpha},$ for a certain
 $\alpha\in(0,1),$ then, by Kellogg's theorem, we can infer that $f\in C^{1,\alpha}(\overline D)$.

By keeping in mind the identification of the classes $\mathscr O$ and $\mathscr F$ introduced in Section \ref{intro}, let us
recall some formulas from \cite{Ag}, which will be useful in the sequel. Let $\mathcal T$ be the operator that associates to each 
$f$ in $\mathscr F$ the interior normal derivative $\frac{\partial u}{\partial\nu}$ 
--- as function of the arclength $s$, which will be measured counterclockwise on $\partial\Omega$ and starting from $\zeta_b$ --- of
the solution of (\ref{pb1})-(\ref{pb2}). We can define parametrically
the values of $\mathcal T(f)(s),$ $s\in[0,|\partial\Omega|],$ by
\begin{equation}\label{param} 
s = \int_0^{\theta}|f'(e^{it})|dt,\ \ \mathit T(f) = \frac
1{2\pi|f'(e^{i\theta})|},\ \ \theta\in[0,2\pi]. 
\end{equation} 
\par
Observe that  
$\mathcal T(f)$ is of class $C^{0,\alpha}$ and satisfies the compatibility conditions
$$
\int_0^{|\partial\Omega|}\mathcal{T}(f)(s)ds=1,\ \ \mathcal{T}(f)>0\mbox{ on }[0,|\partial\Omega|].
$$ 
From (\ref{param}), it descends the relation
\begin{equation*}%\label{fi}
2\pi\mathcal T(f)(s(\theta))s'(\theta)=1,\ \ \theta\in[0,2\pi],
\end{equation*} 
which, once integrated, together with (\ref{param}), gives 
\begin{equation}\label{teta}
s(\theta)=\Phi^{-1}(\theta),\ \ |f'(e^{i\theta})|=\frac1{2\pi\mathcal T(f)(\Phi^{-1}(\theta))},
\ \ \theta\in[0,2\pi],
\end{equation}
where $\Phi^{-1}$ is the inverse of the function $\Phi$ defined in (\ref{Fi}).  

% We conclude the list of the formulas from \cite{Ag} relevant for this paper as follows.
% 
% Let $\Omega\in\mathscr O$ and $f$ the corresponding function in $\mathscr F$, then $\mathcal T(f)$ and the curvature
% $k$ of $\partial\Omega$ are related by the formula:
% \begin{equation}\label{curvaturaomega}
% \kappa(s)=2\pi\mathcal T(f)(s)\left[1-\frac 1{2\pi}\int_0^{|\partial\Omega|}
% \cot\left(\frac{\Phi(s)-\Phi(\sigma)}2\right)\frac d{d\sigma}(\log\mathcal T(f))(\sigma)d\sigma\right],
% \end{equation}
% for $s\in[0,|\partial\Omega|]$.

%%%%%%%%%%%%%%%%%%%%%%%%%%%%%%%%%%%%%%%%%%%%%%%%%%%%%%%%%%%%%%%%%%%%%%%%%%%%%%%%%%%
\section{Stability estimates}\label{stability}
%%%%%%%%%%%%%%%%%%%%%%%%%%%%%%%%%%%%%%%%%%%%%%%%%%%%%%%%%%%%%%%%%%%%%%%%%%%

For $\Omega_j\in\mathscr O$, we fix $\zeta_j\in\partial\Omega_j$ and let
$f_j$ be the mapping in $\mathscr F$ (with $\zeta_b=\zeta_j$) corresponding to $\Omega_j$ ($j=1,2$). 
From (\ref{rappresentazione f}) we know that 
\begin{equation}\label{rapprefj}  
f_j'(z)=e^{i\gamma_j}\exp\left\{\frac 1{2\pi}\int_0^{2\pi}\frac{e^{it}+z}{e^{it}-z}\log\frac
1{2\pi\mathcal T(f_j)(\Phi_j^{-1}(t))}dt\right\},\ \ z\in D, 
\end{equation}
for some $\gamma_j\in\R$, where $\Phi_j^{-1}$ is the inverse of the function
$\Phi_j$ defined by (\ref{Fi}) with $f$ replaced by $f_j$ ($j=1,2$).
%\begin{equation}\label{Fij}
%\Phi_j(s_j)=2\pi\int_0^{s_j}\mathcal T(f_j)(\sigma)d\sigma,\ \ s_j\in[0,|\partial\Omega_j|],
%\end{equation}
%being $s_j$ the arclength on $\partial\Omega_j$.

We are going to estimate how far the domains $\Omega_1$ and $\Omega_2$ are from one other
(up to rotations), depending on an appropriate norm of the difference of the functions 
$\mathcal T(f_1)$ and $\mathcal T(f_2)$. 

%By keeping in mind that $s_j=\Phi_j^{-1}$, we prefer to write (\ref{Fij}) as
%\begin{equation}\label{cond.1}
%\theta=2\pi\int_0^{s_j(\theta)}\mathcal T(f_j)(\sigma)d\sigma,\ \ \theta\in[0,2\pi],
%\end{equation}
%and fix again the relation
%\begin{equation}\label{cond.2}
%s_j'(\theta)=\frac 1{2\pi\mathcal T(f_j)(s_j(\theta))},\ \ \theta\in[0,2\pi]. 
%\end{equation}

%%%-----------------------------------------------------------------------
\subsection{A preliminary estimate}
%%%--------------------------------------------------------------------

All our estimates will be based on Theorem \ref{stab.gen} below, where a bound of the norm
$$
\Vert f_1-f_2\Vert_{1,0,\partial D}
$$ 
is given in terms of the H\"older norm of the difference between the composite functions
$\mathcal T(f_1)\circ\Phi_1^{-1}$ and $\mathcal T(f_2)\circ\Phi_2^{-1}$, which are defined on $\partial D$
and not on $\partial\Omega_1$ and $\partial\Omega_2$. Later on, we shall convert such a bound into estimates
involving the functions $\mathcal T(f_j)$ ($j=1,2$) only.
\par
To this end, let us list here two estimates of H\"older seminorms which will be useful in the sequel.
Let us define
\begin{equation}\label{psi}
\psi_j=\mathcal T(f_j)\circ\Phi_j^{-1}\ \ (j=1,2),
\end{equation}
and
\begin{equation}\label{acca}
h=\log\psi_1-\log\psi_2.
\end{equation} 
If $\mathcal T(f_j)\in\mathscr G_0^{L_j}$, then
\begin{equation}\label{stimina}
[\psi_j]_{0,\alpha,[0,2\pi]}\leq\frac{[\mathcal T(f_j)]_{o,\alpha,[0,L_j]}}{(2\pi m)^{\alpha}}
\leq\frac{M_0}{(2\pi m)^{\alpha}}\ \ (j=1,2), 
\end{equation} 
and
\begin{equation}\label{stimacca} 
[h]_{0,\alpha,[0,2\pi]}\leq C_1\Vert\psi_1-\psi_2\Vert_{\infty,[0,2\pi]}
+C_2[\psi_1-\psi_2]_{0,\alpha,[0,2\pi]}, 
\end{equation}
where 
\begin{equation*}%\label{costanti}
C_1=\frac{M_0^2}{(2\pi)^{\alpha}m^{\alpha+3}},
\ \ C_2=\frac{M_0}{m^2}. 
\end{equation*}  
%Moreover, if $\mathcal T(f_j)\in\mathscr G_1^{L_j}$ and we set $\varphi_j=\mathcal T(f_j)$ $(j=1,2)$, we have that
%\begin{equation}\label{stimaccaprimo}
%[h'] _{0,\alpha,[0,2\pi]}\leq C\{\Vert\psi_1-\psi_2\Vert_{0,\alpha,[0,2\pi]}+
%\Vert\varphi'\circ\Phi_1^{-1}-\varphi'\circ\Phi_2^{-1}\Vert_{0,\alpha,[0,2\pi]}\},
%\end{equation}
%where $C$ is a constant depending on $m$, $\alpha$ and $M_1$. 
%\par
These two estimates follow from the general
fact that, if $\xi$ and $\eta$ are real-valued functions defined on intervals in $\R$,
then
$$
[\xi\circ\eta]\leq[\xi]_{0,\alpha}[\eta]_{0,1}^{\alpha},
$$
and from some algebraic identities.

\begin{theorem}\label{stab.gen} 
Given $\Omega_j\in\mathscr O$ ($j=1,2$), suppose that the arclength is measured counterclockwise on
$\partial\Omega_j$ starting from $\zeta_j\in\partial\Omega_j$ and assume that $f_j$ is 
the function in $\mathscr F$ (with $\zeta_b=\zeta_j$) corresponding to $\Omega_j$ ($j=1,2$).

Let $\psi_j$ be defined by (\ref{psi}) and suppose that $\mathcal T(f_j)\in\mathscr G_0^{L_j}$ ($j=1,2$).
Then, up to rotations around $\zeta_o$, we have that 
\begin{equation}\label{fond} 
\Vert f_1-f_2\Vert_{1,0,\partial D} \leq K\Vert\psi_1-\psi_2\Vert_{0,\alpha,\partial D}, 
\end{equation}
 %\begin{equation}\label{cappa}
% K=\frac 1{2\pi m}\left[\frac1m+\frac{2^{\alpha}\pi^{\alpha-1}}{\alpha}(C_1+C_2)\right], 
%\end{equation}
where $K$, whose expression can be deduced from the proof, is a constant depending on
$\alpha$, $m$ and $M_0$.
\end{theorem}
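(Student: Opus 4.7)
The approach is to exploit the explicit representation (\ref{rapprefj}) of $f_j'$ and reduce (\ref{fond}) to a quantitative sup bound on the difference of two Schwarz integrals. I first write
\[
f_j'(z) = e^{i\gamma_j}\exp(S_j(z)),\qquad S_j(z) := \frac{1}{2\pi}\int_0^{2\pi}\frac{e^{it}+z}{e^{it}-z}\log\frac{1}{2\pi\psi_j(t)}\,dt.
\]
A rotation of $\Omega_2$ by angle $\theta$ around $\zeta_o$ multiplies $f_2'$ by $e^{i\theta}$ while leaving $\mathcal T(f_2)$, $\Phi_2$, and hence $\psi_2$ unchanged; thus the rotation affects only $\gamma_2$, and I may choose it so that $\gamma_1=\gamma_2$ in what follows. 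This allows me to factor out $e^{i\gamma_1}$ and work with the difference of the two exponentials.

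Next, the hypothesis $\mathcal T(f_j)\in\mathscr G_0^{L_j}$ gives $m\leq\psi_j\leq M_0$, so $\mathrm{Re}\,S_j$, being the Poisson integral of a uniformly bounded function, is itself uniformly bounded on $\overline D$ by a constant depending only on $m$ and $M_0$. Combining this with the elementary inequality $|e^A-e^B|\leq e^{\max(\mathrm{Re}\,A,\mathrm{Re}\,B)}|A-B|$ yields
\[
|f_1'(z)-f_2'(z)|\leq C_0\,|S_1(z)-S_2(z)|,\qquad z\in\overline D,
\]
for a constant $C_0=C_0(m,M_0)$. A direct computation gives
\[
S_1(z)-S_2(z) = -\frac{1}{2\pi}\int_0^{2\pi}\frac{e^{it}+z}{e^{it}-z}\,h(t)\,dt,
\]
with $h$ as in (\ref{acca}). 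I then invoke the classical quantitative boundedness of the Schwarz integral from $C^{0,\alpha}(\partial D)$ into $C^{0,\alpha}(\overline D)$, which gives $\|S_1-S_2\|_{\infty,\overline D}\leq C_1\|h\|_{0,\alpha,[0,2\pi]}$. Combining this with (\ref{stimacca}) and the trivial bound $\|h\|_\infty\leq m^{-1}\|\psi_1-\psi_2\|_\infty$ produces an inequality of the form
\[
\|S_1-S_2\|_{\infty,\overline D}\leq K'\,\|\psi_1-\psi_2\|_{0,\alpha,\partial D},
\]
with $K'=K'(\alpha,m,M_0)$.

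Finally, to upgrade the derivative bound to a $C^{1,0}$ bound on $f_1-f_2$ along $\partial D$: since $f_1(0)=f_2(0)=\zeta_o$, integrating $f_1'-f_2'$ along the radial segment from $0$ to any $z\in\overline D$ gives $|f_1(z)-f_2(z)|\leq|z|\cdot\|f_1'-f_2'\|_{\infty,\overline D}$, while the maximum principle applied to the holomorphic function $f_1'-f_2'$ gives $\|f_1'-f_2'\|_{\infty,\overline D}=\|f_1'-f_2'\|_{\infty,\partial D}$. Summing the two estimates and using the bound on $\|S_1-S_2\|_\infty$ just obtained delivers (\ref{fond}). I expect the main technical obstacle to be the quantitative H\"older-to-sup bound on the Schwarz integral on $\overline D$ together with the careful tracking of the constants through the exponential estimate; both are classical ingredients but require some bookkeeping to make the constant $K$ explicit in $\alpha$, $m$, and $M_0$.
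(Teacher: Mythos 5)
Your proposal is correct and follows essentially the same route as the paper: both reduce \eqref{fond} to the Schwarz-integral representation \eqref{rapprefj}, fix the rotation so that $\gamma_1=\gamma_2$, bound the difference of the integrals of $h=\log\psi_1-\log\psi_2$ using the boundedness of the Poisson part by $\Vert h\Vert_\infty$ and of the conjugate part by $[h]_{0,\alpha}$, convert to $\psi_1-\psi_2$ via \eqref{stimacca}, and recover $f_1-f_2$ by radial integration from the common value $\zeta_o$ at $0$. The only cosmetic differences are that you treat $|e^{S_1}-e^{S_2}|$ with a single segment-integration inequality and cite the classical (Privalov-type) sup bound for the Schwarz integral of a H\"older function, whereas the paper splits $f_j'$ into modulus and argument and proves that kernel estimate explicitly via the bound $\frac{2r\sin t}{1+r^2-2r\cos t}\leq\cot\frac t2$ and symmetrization.
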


\begin{proof} 
Up to a rotation around $\zeta_o$, we can assume that in (\ref{rapprefj})
$\gamma_1=\gamma_2=\gamma$. Let us set
$$
\beta_j(z)=\arg f_j'(z),\ \ z\in D\ \ (j=1,2).
$$
It is clear that
\begin{eqnarray*}
|f'_1(z)-f'_2(z)| &=& \left|| f'_1(z)|e^{i\beta_1(z)}-|f'_2(z)|e^{i\beta_2(z)}\right|\\
                      &\leq& \left\Vert f'_1(z)|-|f'_2(z)|\right|+|f'_2(z)|\left|e^{i\beta_1(z)}-e^{i\beta_2(z)}\right|, 
\end{eqnarray*} 
and hence
\begin{equation}\label{c2}
|f'_1(z)-f'_2(z)|\leq\Vert f'_1(z)|-|f'_2(z)\Vert+|f'_2(z)\Vert\beta_1(z)-\beta_2(z)|,
\end{equation} 
since
$$
\left|e^{i\beta_1(z)}-e^{i\beta_2(z)}\right| \leq|\beta_1(z)-\beta_2(z)|.
$$ 
\par
Thus, by keeping in mind \eqref{rapprefj} and writing $z=re^{i\theta}$, it turns out that
\begin{eqnarray*} 
\beta_1(z)-\beta_2(z)&=& -\frac 1{2\pi}\int_0^{2\pi}\frac{2r\sin(\theta-t)} {1+r^2-2r\cos(\theta-t)}h(t)dt\\
                              &=& \frac 1{2\pi}\int_0^{\pi}\frac{2r\sin t} {1+r^2-2r\cos t}[h(\theta+t)-h(\theta-t)]dt,
\end{eqnarray*} 
where $h$ is defined as in (\ref{acca}). 
Since  
\begin{equation*}
 0\leq\frac{2r\sin t}
{1+r^2-2r\cos t}\leq\frac 1{\tan\frac t2},\ \ t\in[0,\pi],
\end{equation*}
and $h\in C^{0,\alpha}[0,2\pi]$, we get
\begin{equation}\label{c4}
|\beta_1(z)-\beta_2(z)|\leq\frac{[h]_{0,\alpha,[0,2\pi]}}{2\pi}\int_0^{\pi}\frac{(2t)^{\alpha}}{\tan\frac t2}dt,
\end{equation}
and the integral converges. 

On the other hand, 
$$
|f_j'(z)|=\exp\left\{\frac 1{2\pi}\int_0^{2\pi}\frac{1-r^2}{1+r^2-2r\cos(t)}\log\frac
1{2\pi\psi_j(t)}dt\right\},\ \ z\in D\ \ (j=1,2); 
$$
thus, 
\begin{equation}\label{c5}
|f_2'(z)|\leq\frac 1{2\pi m},
\end{equation} 
and 
\begin{equation}\label{c6}
\Vert f'_1(z)|-|f'_2(z)\Vert%&\leq&\frac 1{2\pi}\max_{[0,2\pi]}\left|\frac1{\psi_1}-\frac 1{\psi_2}\right|\\
                                \leq\frac 1{2\pi m^2}\Vert\psi_1-\psi_2\Vert_{\infty,[0,2\pi]},
\end{equation}
since both $\psi_1$ and $\psi_2$ are bounded below by $m$.
 
From (\ref{c2}), (\ref{c4}), (\ref{c5}) and (\ref{c6}), we infer that
\begin{equation*}%\label{c7} 
|f'_1(z)-f'_2(z)|\leq\frac 1{2\pi m^2}\Vert\psi_1-\psi_2\Vert_{\infty,[0,2\pi]}
              +\frac{c_{\alpha}}m[h]_{0,\alpha,[0,2\pi]},
\end{equation*} 
where 
$$
c_{\alpha}=\frac{2^{\alpha}}{4{\pi}^2}\int_0^{\pi}t^{\alpha}\cot\frac t2dt.
$$ 
Notice that
\begin{equation*}\label{daf'af}
|f_1(e^{i\theta})-f_2(e^{i\theta})|=\left|\int_0^1\frac d{dt}[f_1(te^{i\theta})-f_2(te^{i\theta})]dt\right|
\leq\Vert f_1'-f_2'\Vert_{\infty,\partial D}; 
\end{equation*} 
in order to obtain (\ref{fond}), we write
$$ 
\Vert f_1-f_2\Vert_{\infty,\overline D} \leq\frac 1{2\pi m^2}\Vert\psi_1-\psi_2\Vert_{\infty,[0,2\pi]}
+\frac{c_{\alpha}}m[h]_{0,\alpha,[0,2\pi]}
$$ 
and we estimate $[h]_{0,\alpha,[0,2\pi]}$ in terms of $\Vert\psi_1-\psi_2\Vert_{0,\alpha,[0,2\pi]}$,
by using (\ref{stimacca}).
\end{proof}

%\begin{figure}%[htbp] % qui, in alto, in basso..??? 
%\begin{center}
%\includegraphics[width=9.3cm]{ruota.eps} 
%\end{center} 
%\end{figure}

%%%-------------------------------------------------------------------------
\subsection{Stability near a disk}\label{subsez.disco}
%%%---------------------------------------------------------------------------

As we pointed out in \cite{Ag}, the disk is the only domain whose Green's function has constant normal 
derivative on the boundary. More precisely, in our notations, the mappings
\begin{equation}\label{fdisco}
 f_C(z)=\zeta_o+\frac {e^{i\gamma}}{2\pi C}z,\ \ z\in\overline D,
\end{equation}
with $\gamma\in\R$, are the only elements $\mathscr F$ such that $\mathcal T(f)=C$.
The next result specifies how far from $f_C$ is a mapping $f\in\mathscr F$ if $\mathcal T(f)$ is not constant. 

\begin{theorem}\label{disco} 
Let $f_C$ be given by (\ref{fdisco}) for some constants $C\in[m,M_0]$ and $\gamma\in\R$
and let $\Omega$ be in $\mathscr O$ with perimeter $L$. 

If $\mathcal T(f)\in\mathscr G_0^L$, then
\begin{equation*}%\label{stima disco} 
\Vert f-f_C\Vert_{1,0,\partial D} \leq K\left[1+\frac1{(2\pi m)^{\alpha}}\right]\Vert\mathcal T(f)-C\Vert_{0,\alpha,[0,L]},
\end{equation*} 
where $K$ is the constant of Theorem \ref{stab.gen}.
\end{theorem}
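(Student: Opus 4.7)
The plan is to apply Theorem \ref{stab.gen} with $f_1=f$ and $f_2=f_C$. First, one verifies that $f_C$ belongs to $\mathscr F$ and corresponds to the disk $B(\zeta_o,1/(2\pi C))$; the compatibility condition $\int\mathcal T(f_C)\,ds=1$ forces its perimeter to be $L_C=1/C$, which is in general different from $L$. The associated normal derivative is constant, $\mathcal T(f_C)\equiv C$, so with $\Phi_C(s)=2\pi C s$ one gets $\Phi_C^{-1}(\theta)=\theta/(2\pi C)$ and therefore $\psi_2:=\mathcal T(f_C)\circ\Phi_C^{-1}\equiv C$ on $[0,2\pi]$. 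The hypothesis $C\in[m,M_0]$ ensures $\mathcal T(f_C)\in\mathscr G_0^{L_C}$, so Theorem \ref{stab.gen} legitimately applies. The ``up to rotations around $\zeta_o$'' clause in that theorem is absorbed into the free parameter $\gamma$ entering the definition of $f_C$, which we align with the $\gamma$ of the representation \eqref{rappresentazione f} for $f$.

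Setting $g:=\mathcal T(f)-C$, the core of the argument is to estimate $\|\psi_1-\psi_2\|_{0,\alpha,[0,2\pi]}=\|g\circ\Phi^{-1}\|_{0,\alpha,[0,2\pi]}$ in terms of $\|g\|_{0,\alpha,[0,L]}$. For the sup norm the bound is immediate, because $\Phi^{-1}$ is a bijection from $[0,2\pi]$ onto $[0,L]$. For the H\"older seminorm I would invoke the composition inequality $[\xi\circ\eta]_{0,\alpha}\leq[\xi]_{0,\alpha}[\eta]_{0,1}^\alpha$ noted just before Theorem \ref{stab.gen}, together with the Lipschitz bound
\begin{equation*}
[\Phi^{-1}]_{0,1,[0,2\pi]}=\sup_{\theta}\frac{1}{2\pi\mathcal T(f)(\Phi^{-1}(\theta))}\leq\frac{1}{2\pi m},
\end{equation*}
which follows from $\mathcal T(f)\geq m$. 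These give $[g\circ\Phi^{-1}]_{0,\alpha,[0,2\pi]}\leq(2\pi m)^{-\alpha}[g]_{0,\alpha,[0,L]}$.

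Combining the two pieces,
\begin{equation*}
\|\psi_1-\psi_2\|_{0,\alpha,[0,2\pi]}\leq\|g\|_{\infty,[0,L]}+\frac{[g]_{0,\alpha,[0,L]}}{(2\pi m)^{\alpha}}\leq\left[1+\frac{1}{(2\pi m)^{\alpha}}\right]\|g\|_{0,\alpha,[0,L]},
\end{equation*}
and inserting this into the conclusion of Theorem \ref{stab.gen} yields the stated estimate. No serious obstacle is expected: the only two points requiring mild care are the perimeter mismatch $L\neq L_C$ (which is harmless, since the functions $\psi_j$ live on $[0,2\pi]$ in either case) and the rotation normalization, both addressed as above.
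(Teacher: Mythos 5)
Your proposal is correct and follows essentially the same route as the paper: apply Theorem \ref{stab.gen} with $f_2=f_C$ (so $\psi_2\equiv C$), then bound $\Vert\psi-C\Vert_{0,\alpha,[0,2\pi]}$ by noting the sup norm is preserved under composition with the bijection $\Phi^{-1}$ and the H\"older seminorm picks up the factor $(2\pi m)^{-\alpha}$ from $[\Phi^{-1}]_{0,1}\leq\frac1{2\pi m}$, exactly as in the paper's use of \eqref{stimina}. Your extra remarks on the perimeter mismatch and the rotation/$\gamma$ normalization are sound and only make explicit what the paper leaves implicit.
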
 

\begin{proof} 
Theorem \ref{stab.gen} gives that 
\begin{equation*}%\label{pre}
\Vert f-f_C\Vert_{1,0,\partial D}\leq K\Vert\psi-C\Vert_{0,\alpha,[0,2\pi]}, 
\end{equation*}
where $\psi=\mathcal T(f)\circ\Phi^{-1}$; it remains to estimate the right-hand side of the latter inequality by the 
H\"older norm of $\mathcal T(f)-C$. This is readly achieved by observing that 
$$
\Vert\psi-C\Vert_{\infty,[0,2\pi]}=\Vert\mathcal T(f)-C\Vert_{\infty,[0,L]}
$$
and, from (\ref{stimina}), that
$$
[\psi-C]_{0,\alpha,[0,2\pi]}\leq\frac1{(2\pi m)^{\alpha}}[\mathcal T(f)-C]_{0,\alpha,[0, L]}. 
$$
\end{proof}

We are now in position to prove Theorem \ref{raggi}.

\begin{proof}[Proof of Theorem \ref{raggi}]
Let $f_C$ be defined as in (\ref{fdisco}); then $\rho=|f_C(e^{i\theta})-\zeta_o|$ for every $\theta\in[0,2\pi]$.

Now, notice that
$$
R=\max_{0\leq\theta\leq2\pi}|f(e^{i\theta})-\zeta_o|;
$$
therefore, if $\theta_o\in[0,2\pi]$ maximizes $|f(e^{i\theta})-\zeta_o|$, we have:
\begin{equation*}
R-\rho=|f(e^{i\theta_o})-\zeta_o|-|f_C(e^{i\theta_o})-\zeta_o|\\
\leq|f(e^{i\theta_o})-f_C(e^{i\theta_o})|.
\end{equation*}
Thus, the conclusion plainly follows from Theorem \ref{disco}.
\end{proof}

\begin{remark}{\rm
In order to compare this result with   ̃\cite[Theorem 1]{Af} and   ̃\cite[Theorem 1.2]{Bra}, 
we observe that the proof of Theorem \ref{raggi} is straightforward also 
if we replace $B(\zeta_o,\rho)$ and $B(\zeta_o,R)$ by the largest disk
contained in $\Omega$ and for the smallest disk containing $\Omega$ (not necessarily centered in $\zeta_o$).
}
\end{remark}

%%%-------------------------------------------------------------------------------------
\subsection{Domains with same perimeter}\label{subsez.perim}
%%%--------------------------------------------------------------------------

We want to estimate the right-hand side of (\ref{fond}) in terms of some suitable distance between the functions 
$\mathcal T(f_1)$ and $\mathcal T(f_2)$. In this subsection, we shall start by considering the case of two domains with the same 
perimeter. Differently from Subsection \ref{subsez.disco}, it seems that in this case we cannot avoid to require that
$\mathcal T(f_1)$ and $\mathcal T(f_2)$ are of class $C^{1,\alpha}$.

\begin{theorem}\label{lugua} 
Given $\Omega_1$, $\Omega_2\in\mathscr O$, both with perimeter that equals $L$, 
let $f_1$ and $f_2$ be the conformal mappings in $\mathscr F$ corresponding to $\Omega_1$
and $\Omega_2$, respectively.

If $\mathcal T(f_1)$, $\mathcal T(f_2)\in\mathscr G_1^L$, 
then, up to domains' rotations around $\zeta_o$, we have that
\begin{equation}\label{stessiperim}
\Vert f_1-f_2\Vert_{1,0,\partial D} \leq K\left\{\Vert\varphi_1-\varphi_2\Vert_{\infty,[0,L]}^{\alpha}
+\Vert\varphi_1'-\varphi_2'\Vert_{\infty,[0,L]}\right\}, 
\end{equation}
where $\varphi_j=\mathcal T(f_j)$ ($j=1,2$) and the constant $K$ depends on $\alpha$, $m$, $M_1$ and $L$ and can
be deduced from the proof. 
\end{theorem}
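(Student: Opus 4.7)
The plan is to feed Theorem \ref{stab.gen} and then convert the Hölder norm of $\psi_1-\psi_2$ on $\partial D$ into the quantity on the right-hand side of \eqref{stessiperim}. Since $\Omega_1$ and $\Omega_2$ share the same perimeter $L$, both $\Phi_j:[0,L]\to[0,2\pi]$ have the same domain and range, which makes the chain of substitutions below clean.

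First I would exploit the identity $\psi_j=\varphi_j\circ\Phi_j^{-1}$ together with $\Phi_j'=2\pi\varphi_j$ to write, at any $\theta\in[0,2\pi]$ and setting $s_j=\Phi_j^{-1}(\theta)$,
\[
\psi_1(\theta)-\psi_2(\theta) = \bigl[\varphi_1(s_1)-\varphi_2(s_1)\bigr]+\bigl[\varphi_2(s_1)-\varphi_2(s_2)\bigr],
\]
and control $|s_1-s_2|$ through the identity $\Phi_1(s_1)=\Phi_2(s_2)=\theta$, which yields $\Phi_1(s_1)-\Phi_1(s_2)=2\pi\int_0^{s_2}(\varphi_2-\varphi_1)\,d\sigma$; using $\Phi_1'\geq 2\pi m$ this gives the key estimate
\[
|\Phi_1^{-1}(\theta)-\Phi_2^{-1}(\theta)|\leq\frac{L}{m}\,\|\varphi_1-\varphi_2\|_{\infty,[0,L]}.
\]
Combined with $\|\varphi_j'\|_{\infty}\leq M_1$, this produces a bound of the form $\|\psi_1-\psi_2\|_{\infty,[0,2\pi]}\leq C\|\varphi_1-\varphi_2\|_{\infty,[0,L]}$.

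Next I would estimate $\|(\psi_1-\psi_2)'\|_{\infty,[0,2\pi]}$. By the chain rule $\psi_j'(\theta)=\varphi_j'(s_j)/[2\pi\varphi_j(s_j)]$, so after putting both fractions over a common denominator the numerator splits as
\[
\varphi_1'(s_1)\bigl[\varphi_2(s_2)-\varphi_1(s_1)\bigr]+\varphi_1(s_1)\bigl[\varphi_1'(s_1)-\varphi_2'(s_2)\bigr],
\]
and each bracket is treated by inserting an intermediate point at $s_2$: the first piece is controlled in sup norm exactly as above (losing only a factor $M_1$), while the second piece requires
\[
|\varphi_1'(s_1)-\varphi_2'(s_2)|\leq[\varphi_1']_{0,\alpha,[0,L]}|s_1-s_2|^\alpha+\|\varphi_1'-\varphi_2'\|_{\infty,[0,L]},
\]
and the first summand becomes $\leq M_1(L/m)^{\alpha}\|\varphi_1-\varphi_2\|_{\infty,[0,L]}^\alpha$ by the key estimate above. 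Dividing by $(2\pi m)^2$ gives
\[
\|\psi_1'-\psi_2'\|_{\infty,[0,2\pi]}\leq K_1\bigl(\|\varphi_1-\varphi_2\|_{\infty,[0,L]}^\alpha+\|\varphi_1'-\varphi_2'\|_{\infty,[0,L]}\bigr),
\]
after using $\|\varphi_1-\varphi_2\|_\infty\leq(2M_0)^{1-\alpha}\|\varphi_1-\varphi_2\|_\infty^{\alpha}$ to homogenize.

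Finally, for any $g\in C^{1}$ on a bounded interval $I$ one has the elementary interpolation $[g]_{0,\alpha,I}\leq 2\|g\|_{\infty,I}+\|g'\|_{\infty,I}$ (split the cases $|x-y|\lessgtr 1$). Applied to $g=\psi_1-\psi_2$, and combined with the two sup-bounds above, this yields
\[
\|\psi_1-\psi_2\|_{0,\alpha,[0,2\pi]}\leq K_2\bigl(\|\varphi_1-\varphi_2\|_{\infty,[0,L]}^\alpha+\|\varphi_1'-\varphi_2'\|_{\infty,[0,L]}\bigr),
\]
so that \eqref{stessiperim} follows at once from Theorem \ref{stab.gen}. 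The main technical obstacle is the $\|\psi_1'-\psi_2'\|_\infty$ step: one must compare derivatives evaluated at two different pre-images $s_1,s_2$, which is exactly where the stronger hypothesis $\mathcal T(f_j)\in\mathscr G_1^L$ (the $C^{1,\alpha}$ regularity of the $\varphi_j$) gets used to absorb the mismatch $s_1\neq s_2$ into a $\|\varphi_1-\varphi_2\|_\infty^{\alpha}$ term.
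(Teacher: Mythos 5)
Your proposal is correct and follows essentially the same route as the paper: apply Theorem \ref{stab.gen}, derive the key bound $|\Phi_1^{-1}(\theta)-\Phi_2^{-1}(\theta)|\leq\frac{L}{m}\Vert\varphi_1-\varphi_2\Vert_{\infty,[0,L]}$ from the common value $\Phi_1(s_1)=\Phi_2(s_2)=\theta$, estimate $\Vert\psi_1-\psi_2\Vert_\infty$ and $\Vert\psi_1'-\psi_2'\Vert_\infty$ by inserting an intermediate point and using $\psi_j'=\varphi_j'(s_j)/[2\pi\varphi_j(s_j)]$, and then control the H\"older seminorm of $\psi_1-\psi_2$ through its derivative. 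Your variations (Lipschitz bound plus end homogenization instead of the H\"older seminorm, a common-denominator splitting instead of the paper's three-term telescoping, and the interpolation $[g]_{0,\alpha}\leq 2\Vert g\Vert_\infty+\Vert g'\Vert_\infty$ in place of the paper's Lagrange-theorem estimate) only change the unspecified constant $K$.
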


\begin{proof} 
From (\ref{param}) and (\ref{teta}), we have that 
$$ 
\frac{\theta}{2\pi}=\int_0^{\Phi_1^{-1}(\theta)}\varphi_1(\sigma)d\sigma=
\int_0^{\Phi_2^{-1}(\theta)}\varphi_2(\sigma)d\sigma,\ \ \theta\in[0,2\pi] ,
$$ 
and hence
$$
\int_{\Phi_1^{-1}(\theta)}^{\Phi_2^{-1}(\theta)}\varphi_1(\sigma)d\sigma=
\int_0^{\Phi_2^{-1}(\theta)}[\varphi_1(\sigma)-\varphi_2(\sigma)]d\sigma. 
$$ 
\par
Thus,
\begin{equation}\label{mags}
|\Phi_1^{-1}(\theta)-\Phi_2^{-1}(\theta)|\leq\frac Lm\Vert\varphi_1-\varphi_2\Vert_{\infty,[0,L]},\ \ \theta\in[0,2\pi],
\end{equation}
since $\varphi_1,$ $\varphi_2\in\mathscr G_1^L$.

Let $\psi_j$ be the functions defined in (\ref{psi}).
We now estimate $\Vert\psi_1-\psi_2\Vert_{\infty,[0,2\pi]}$ and $[\psi_1-\psi_2]_{0,\alpha,[0,2\pi]}$. 
It is clear that
\begin{multline*}
|\psi_1(\theta)-\psi_2(\theta)| \leq \label{punto2}\\ 
|\varphi_1(\Phi_1^{-1}(\theta))-\varphi_1(\Phi_2^{-1}(\theta))|+
|\varphi_1(\Phi_2^{-1}(\theta))-\varphi_2(\Phi_2^{-1}(\theta))|\leq \\ 
[\varphi_1]_{0,\alpha,[0,L]}
|\Phi_1^{-1}(\theta)-\Phi_2^{-1}(\theta)|^{\alpha}+
\Vert\varphi_1-\varphi_2\Vert_{\infty,[0,L]},
\end{multline*}
and hence, from (\ref{mags}), we obtain the inequality 
\begin{equation}\label{tria}
\Vert\psi_1-\psi_2\Vert_{\infty,[0,2\pi]}\leq
\left[M_1\left(\frac Lm\right)^{\alpha}+(2M_1)^{1-\alpha}\right]
\Vert\varphi_1-\varphi_2\Vert_{\infty,[0,L]}^{\alpha}, 
\end{equation}
since $\varphi_1,$ $\varphi_2\in\mathscr G_1^L$.

Next, by Lagrange's Theorem, we have:
\begin{eqnarray*}
|(\psi_1-\psi_2)(\theta)-(\psi_1-\psi_2)(\hat\theta)| &\leq& \Vert\psi_1'-\psi_2'\Vert_{\infty,[0,2\pi]}|\theta-\hat\theta|\\                                                                             
                                                                          &\leq& (2\pi)^{1-\alpha}
                                                                                     \Vert\psi_1'-\psi_2'\Vert_{\infty,[0,2\pi]}|\theta-\hat\theta|^{\alpha};
\end{eqnarray*}
thus,
\begin{equation}\label{siga}
[\psi_1-\psi_2]_{0,\alpha,[0,2\pi]}\leq(2\pi)^{1-\alpha}\Vert\psi_1'-\psi_2'\Vert_{\infty,[0,2\pi]}.
\end{equation}                                                                                     
 
In order to estimate the right-hand side of the latter inequality, we notice that
$$
\psi_j'(\theta)=\frac{\varphi_j'(\Phi_j^{-1}(\theta))}{2\pi\varphi_j(\Phi_j^{-1}(\theta))}
\ \ (j=1,2)
$$
and, by setting $s_j=\Phi_j^{-1}(\theta)$, we write:
\begin{multline*}
2\pi|\psi_1'(\theta)-\psi_2'(\theta)| \leq\\ 
\frac{|\varphi_1'(s_1)-\varphi_1'(s_2)|}{\varphi_1(s_1)}
                                                             +\left|\frac 1{\varphi_1(s_1)}-\frac 1{\varphi_2(s_2)}\right|\varphi_1'(s_2)
                                                             +\frac{|\varphi_1'(s_2)-\varphi_2'(s_2)|}{\varphi_2(s_2)}\\
                                                  \leq  \frac{M_1}m|s_1-s_2|^{\alpha} +\frac{M_1}{m^2}
                                                              \Vert\psi_1-\psi_2\Vert_{\infty,[0,2\pi]} 
                                                              +\frac 1m\Vert\varphi_1'-\varphi_2'\Vert_{\infty,[0,L]}, 
\end{multline*}  
since $\varphi_1,$ $\varphi_2\in\mathscr G_1^L$.
By (\ref{mags}) and (\ref{tria}), we then obtain 
\begin{multline}\label{triatria}
2\pi\Vert\psi_1'-\psi_2'\Vert_{\infty,[0,2\pi]}\leq\frac{M_1}m\left(\frac
Lm\right)^{\alpha}\Vert\varphi_1-\varphi_2\Vert_{\infty,[0,L]}^{\alpha}\\ 
+\frac{M_1}{m^2}\left\{M_1\left(\frac Lm\right)^{\alpha}+(2M_1)^{1-\alpha}\right\}
\Vert\varphi_1-\varphi_2\Vert_{\infty,[0,L]}^{\alpha}
+ \frac 1m\Vert\varphi_1'-\varphi_2'\Vert_{\infty,[0,L]}. 
\end{multline}
Therefore, (\ref{stessiperim}) easily follows from (\ref{tria}) and from (\ref{siga}) 
together with the latter inequality.
\end{proof}

We recall that the {\it Hausdorff distance} between two compact subsets $A$ and $B$ of $R^{n}$ 
is defined as 
$$
d_{\mathit H}(A,B)=\max\{\rho(A,B),\rho(B,A)\},
$$
where
$$
\rho(A,B)=\sup_{a\in A}\inf_{b\in B}|a-b|.
$$

\begin{proof}[Proof of Theorem \ref{hausdorff}]
As usual, let $f_j$ be the mapping in $\mathscr F$ corresponding to $\Omega_j$ ($j=1,2$). Thus,
\begin{eqnarray*}
\rho(\Omega_1,\Omega_2) &=& \sup_{\zeta_1\in\Omega_1}\inf_{\zeta_2\in\Omega_2}|\zeta_1-\zeta_2|\\
                                         &=& \sup_{0\leq\theta_1\leq2\pi}
                                                 \inf_{0\leq\theta_2\leq2\pi}|f_1(e^{i\theta_1})-f_2(e^{i\theta_2})|,
\end{eqnarray*}
and hence
\begin{equation*}
\rho(\Omega_1,\Omega_2)\leq\sup_{0\leq\theta_1\leq2\pi}|f_1(e^{i\theta_1})-f_2(e^{i\theta_1})|
\leq\Vert f_1-f_2\Vert_{1,0,\partial D}.
\end{equation*}
The conclusion then follows from Theorem \ref{lugua}.
\end{proof}

\subsection{Generic domains}\label{subsez.dom.gen}
%%%-------------------------------------------------------------------------------

If $\Omega_1$ and $\Omega_2$ have different perimeters, say $L_1$ and $L_2$, the functions 
$\mathcal T(f_1)$ and $\mathcal T(f_2)$ are defined on different intervals, $[0,L_1]$ and $[0,L_2]$,
and we cannot compare their values directly. Thus, we rescale them: if $\varphi_j=\mathcal T(f_j)$, 
we set 
\begin{equation}\label{cappuccio}
\hat\varphi_j(s)=\varphi_j\left(\frac{L_j}Ls\right),\ \ s\in[0,L],\ \ \mbox{where}\ \ L=\frac{L_1+L_2}2\ \ (j=1,2).  
\end{equation}
The functions $\hat\varphi_j$ are now defined on a common interval.

\begin{theorem}\label{ultimo} 
Let $\Omega_1$ and $\Omega_2$ be domains in $\mathscr O$ with perimeters $L_1$ and
$L_2$, respectively, such that
$$
0<p\leq L_1,L_2\leq P
$$
for some constants $p$ and $P$. Let $f_1$, $f_2\in\mathscr F$ be as usual and assume that
(\ref{cappuccio}) holds.

If $\varphi_j=\mathcal T(f_j)\in\mathscr G_1^{L_j}$ ($j=1,2$), then, up to domains' 
rotations around $\zeta_o$, we have that
\begin{equation*}%\label{ultima}
\Vert f_1-f_2\Vert_{1,0,\partial D} \leq
K\left\{\left(\frac{|L_1-L_2|}P+\frac{\Vert\hat\varphi_1-\hat\varphi_2\Vert_{\infty,[0,L]}}{M_1}\right)^{\alpha}+
\Vert\hat\varphi_1'-\hat\varphi_2'\Vert_{\infty,[0,L]}\right\}, 
\end{equation*}
where the constant $K$ depends on $\alpha$, $m$, $M_1$, $p$ and $P$
and its expression can be deduced from the proof. 
\end{theorem}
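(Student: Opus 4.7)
The plan is to follow the blueprint of Theorem \ref{lugua}: reduce everything to Theorem \ref{stab.gen} by controlling $\|\psi_1 - \psi_2\|_{0,\alpha,\partial D}$, where $\psi_j = \mathcal{T}(f_j)\circ\Phi_j^{-1}$ is defined on $[0,2\pi]$. The only genuine novelty relative to the equal-perimeter case is that the two arclength parameters live on intervals of different lengths, so a rescaling is needed before the estimates of Theorem \ref{lugua} can be imitated.

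Concretely, I would introduce $t_j(\theta) := (L/L_j)\,\Phi_j^{-1}(\theta) \in [0,L]$, so that $\psi_j(\theta) = \hat\varphi_j(t_j(\theta))$. Changing variables in \eqref{teta} gives the integral identity
\begin{equation*}
\int_0^{t_j(\theta)}\hat\varphi_j(u)\,du = \frac{L\,\theta}{2\pi L_j},
\end{equation*}
and subtracting the two identities (for $j=1,2$) produces, after using $\hat\varphi_j \geq m$ and $\theta \leq 2\pi$, the preliminary bound
\begin{equation*}
|t_1(\theta) - t_2(\theta)| \leq \frac{L}{m}\left\{\frac{|L_1-L_2|}{L_1 L_2} + \|\hat\varphi_1 - \hat\varphi_2\|_{\infty,[0,L]}\right\}.
\end{equation*}
Using $p \leq L_j \leq P$, this takes the form $C(m,p,P)\bigl\{|L_1-L_2|/P + \|\hat\varphi_1-\hat\varphi_2\|_\infty/M_1\bigr\}$ after absorbing constants. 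This plays exactly the role of \eqref{mags}. I would then estimate $\|\psi_1 - \psi_2\|_\infty$ by splitting as in \eqref{punto2}, using the Hölder seminorm $[\hat\varphi_1]_{0,\alpha,[0,L]}$, which is controlled by $\|\varphi_1'\|_\infty$ (hence by $M_1$) and $L_1, L$; the trivial estimate $\|\hat\varphi_1 - \hat\varphi_2\|_\infty \leq (2M_1)^{1-\alpha}\|\hat\varphi_1 - \hat\varphi_2\|_\infty^\alpha$ then lets me combine everything into the $\alpha$-power structure appearing in the statement.

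For the Hölder seminorm, I use $[\psi_1-\psi_2]_{0,\alpha,[0,2\pi]} \leq (2\pi)^{1-\alpha}\|\psi_1'-\psi_2'\|_\infty$ as in \eqref{siga}, and here lies the main technical obstacle: computing $\psi_j'(\theta) = (L/L_j)\,\hat\varphi_j'(t_j(\theta))/(2\pi\,\hat\varphi_j(t_j(\theta)))$ shows that, unlike in Theorem \ref{lugua}, the prefactors $L/L_j$ differ between the two terms. I would accommodate this by splitting
\begin{equation*}
\psi_1'(\theta) - \psi_2'(\theta) = \left(\frac{L}{L_1} - \frac{L}{L_2}\right)\frac{\hat\varphi_1'(t_1)}{2\pi\,\hat\varphi_1(t_1)} + \frac{L}{L_2}\left\{\frac{\hat\varphi_1'(t_1)}{2\pi\,\hat\varphi_1(t_1)} - \frac{\hat\varphi_2'(t_2)}{2\pi\,\hat\varphi_2(t_2)}\right\},
\end{equation*}
where the first summand is $O(|L_1-L_2|)$ (bounded using $m, M_1, p, P$) and the second is handled by the same three-term decomposition as in \eqref{triatria}, now with $|t_1 - t_2|$ replacing $|s_1 - s_2|$. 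The Hölder continuity of $\hat\varphi_j'$ (bounded from $M_1$ and the scaling $L_j/L$) converts $|t_1-t_2|^\alpha$ into the desired $\bigl\{|L_1-L_2|/P + \|\hat\varphi_1-\hat\varphi_2\|_\infty/M_1\bigr\}^\alpha$ term, while $\|\hat\varphi_1'-\hat\varphi_2'\|_\infty$ survives on its own. The linear-in-$|L_1-L_2|$ remainder is absorbed using $|L_1-L_2|/P \leq 2$ to bump it up to the $\alpha$-power form. Combining these bounds and invoking Theorem \ref{stab.gen} yields the claim, with constant $K$ depending on $\alpha, m, M_1, p, P$ as asserted.
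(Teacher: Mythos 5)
Your proposal is correct and follows essentially the same route as the paper: rescale to $\hat\varphi_j$, derive the analogue of \eqref{mags} for $|\hat s_1(\theta)-\hat s_2(\theta)|$ (your $t_j$), then repeat the estimates of Theorem \ref{lugua} with $\hat\varphi_j$, $\hat s_j$ in place of $\varphi_j$, $s_j$, and conclude via \eqref{siga} and Theorem \ref{stab.gen}. Your explicit handling of the mismatched prefactors $L/L_j$ in $\psi_j'$ (splitting off the $O(|L_1-L_2|)$ term and absorbing it into the $\alpha$-power) is a detail the paper passes over with ``proceed as in the proof of Theorem \ref{lugua}'', and you treat it correctly.
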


\begin{proof}
We preliminary notice that 
\begin{equation}\label{older1}
\Vert\hat\varphi_j\Vert_{\infty,[0,L]}=\Vert\varphi_j\Vert_{\infty,[0,L_j]},
\ \ [\hat\varphi_j]_{0,\alpha,[0,L]}=\left(\frac{L_j}L\right)^{\alpha}[\varphi_j]_{0,\alpha,[0,L_j]}
\end{equation} 
and 
\begin{equation}\label{older2}
\Vert\hat\varphi_j'\Vert_{\infty,[0,L]}=\frac{L_j}L\Vert\varphi_j'\Vert_{\infty,[0,L_j]},
\ \ [\hat\varphi_j']_{0,\alpha,[0,L]}=\left(\frac{L_j}L\right)^{\alpha+1}[\varphi_j']_{0,\alpha,[0,L_j]}.
\end{equation}

The proof will proceed as the one of Theorem \ref{lugua}, with some variations. 
The following notations and formulas will be useful:
\begin{equation*}%\label{nuovefi}
\hat s_j(\theta)=\frac L{L_j}\Phi_j^{-1}(\theta),\ \
\psi_j(\theta)=\hat\varphi_j(\hat s_j(\theta))\ \ (j=1,2).
\end{equation*} 
Since
$$
\frac{\theta}{2\pi}=\int_0^{\Phi_j^{-1}(\theta)}\varphi_j(\sigma)d\sigma=\frac{L_j}L\int_0^{\hat
s_j(\theta)}\hat\varphi_j(\sigma)d\sigma,\ \ \theta\in[0,2\pi]\ \ (j=1,2),
$$ 
we derive an estimate similar to (\ref{mags}):
\begin{equation*} 
\frac{L_1}Lm|\hat s_1(\theta)-\hat s_2(\theta)|\leq
|L_1-L_2|\,\Vert\hat\varphi_1\Vert_{\infty,[0,L]}+L_2\Vert\hat\varphi_1-\hat\varphi_2\Vert _{\infty,[0,L]};
\end{equation*}
thus,
\begin{equation*}%\label{magscap} 
|\hat s_1(\theta)-\hat s_2(\theta)|\leq\frac{M_1}m\frac{P^2}p\left\{\frac{|L_1-L_2|}P
+\frac{\Vert\hat\varphi_1-\hat\varphi_2\Vert_{\infty,[0,L]}}{M_1}\right\},\ \ \theta\in[0,2\pi]. 
\end{equation*} 
From now on, we can proceed, by using (\ref{older1}) and (\ref{older2}), as in the proof of Theorem (\ref{lugua}),  
with $\varphi_j$ and $s_j$ replaced by $\hat\varphi_j$
and $\hat s_j$, respectively: (\ref{tria}) changes into
\begin{equation}\label{nomina}
\Vert\psi_1-\psi_2\Vert_{\infty,[0,2\pi]}\leq K_1\left\{\frac{|L_1-L_2|}P
+\frac{\Vert\hat\varphi_1-\hat\varphi_2\Vert_{\infty,[0,L]}}{M_1}\right\}^{\alpha},
\end{equation}
where
$$
K_1=M_1\left[\left(\frac{M_1}m\frac{P^3}{p^2}\right)^{\alpha}+4^{1-\alpha}\right];
$$
(\ref{triatria}) becomes
$$
2\pi\Vert\psi_1'-\psi_2'\Vert_{\infty,[0,2\pi]}\leq K_2\left\{\frac{|L_1-L_2|}P
+\frac{\Vert\hat\varphi_1-\hat\varphi_2\Vert_{\infty,[0,L]}}{M_1}\right\}^{\alpha}
+\frac P{pm}\Vert\hat\varphi_1'-\hat\varphi_2'\Vert_{\infty,[0,L]}
$$
where 
$K_2$, easy computable, is still a constant depending on $\alpha$, $m$, $M_1$, $p$ and $P$.
The conclusion then follows from (\ref{nomina}), (\ref{siga}) and the latter inequality.
\end{proof}

By the same arguments used for the proof of Theorem \ref{hausdorff}, Theorem \ref{ultimo}
yields the following corollary.

\begin{cor}
Under the same assumptions of Theorem \ref{ultimo}, it holds that
$$
d_{\mathit H}(\Omega_1,\Omega_2)\leq
K\left\{\left(\Vert\hat\varphi_1-\hat\varphi_2\Vert_{\infty,[0,L]}+|L_1-L_2|\right)^{\alpha}+
\Vert\hat\varphi_2'-\hat\varphi_1'\Vert_{\infty,[0,L]}\right\},
$$
where $K$ is a constant depending on $\alpha$, $m$, $M_1$, on $p$ and $P$.
\end{cor}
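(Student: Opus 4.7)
The plan is to mirror the proof of Theorem \ref{hausdorff} almost verbatim, replacing the use of Theorem \ref{lugua} with Theorem \ref{ultimo}, and then to rewrite the right-hand side by absorbing the constants $P$ and $M_1$ that appear as denominators inside the parenthesis into the final multiplicative constant $K$.

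First, exactly as at the beginning of the proof of Theorem \ref{hausdorff}, I would parametrize $\partial\Omega_j=f_j(\partial D)$ and write
$$\rho(\Omega_1,\Omega_2)=\sup_{0\leq\theta_1\leq 2\pi}\inf_{0\leq\theta_2\leq 2\pi}|f_1(e^{i\theta_1})-f_2(e^{i\theta_2})|\leq\sup_{0\leq\theta\leq 2\pi}|f_1(e^{i\theta})-f_2(e^{i\theta})|\leq\Vert f_1-f_2\Vert_{1,0,\partial D}.$$
Since $\Vert f_1-f_2\Vert_{1,0,\partial D}$ is symmetric in $(f_1,f_2)$, exchanging the roles of $\Omega_1$ and $\Omega_2$ produces the identical bound for $\rho(\Omega_2,\Omega_1)$. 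Taking the maximum yields $d_{\mathit H}(\Omega_1,\Omega_2)\leq\Vert f_1-f_2\Vert_{1,0,\partial D}$.

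Next, I would invoke Theorem \ref{ultimo}: after a suitable rotation of $\Omega_1,\Omega_2$ around $\zeta_o$, it majorizes $\Vert f_1-f_2\Vert_{1,0,\partial D}$ by a constant $K'$, depending on $\alpha,m,M_1,p,P$, times $\bigl(|L_1-L_2|/P+\Vert\hat\varphi_1-\hat\varphi_2\Vert_{\infty,[0,L]}/M_1\bigr)^{\alpha}+\Vert\hat\varphi_1'-\hat\varphi_2'\Vert_{\infty,[0,L]}$. The final cosmetic step is the elementary inequality $a/P+b/M_1\leq[\min(P,M_1)]^{-1}(a+b)$ for nonnegative $a,b$; raising to the power $\alpha$ and absorbing $[\min(P,M_1)]^{-\alpha}$ into the outer constant $K$ gives the announced estimate.

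There is no genuine obstacle: the corollary is a direct repackaging of Theorem \ref{ultimo} as a Hausdorff-distance estimate. The only point that deserves a word is the observation that the argument of Theorem \ref{hausdorff} actually controls the full Hausdorff distance, and not merely $\rho(\Omega_1,\Omega_2)$, thanks to the symmetry of the bound in $f_1,f_2$; this is why no separate treatment of $\rho(\Omega_2,\Omega_1)$ is required.
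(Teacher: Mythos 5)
Your proposal is correct and follows exactly the route the paper intends: the paper derives this corollary "by the same arguments used for the proof of Theorem \ref{hausdorff}", i.e.\ bounding $d_{\mathit H}(\Omega_1,\Omega_2)$ by $\Vert f_1-f_2\Vert_{1,0,\partial D}$ via the boundary parametrizations and then applying Theorem \ref{ultimo}, with the factors $1/P$ and $1/M_1$ absorbed into the constant $K$. Nothing further is needed.
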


%%%%%%%%%%%%%%%%%%%%%%%%%%%%%%%%%%%%%%%%%%%%%%%%%%%%%%%%%%%%%%%%%%%%%%%%%%%%%%%%%%%%%%%%%%%%%%

\end{document}